\definecolor{Myblue}{rgb}{0,.3,.6}
\newcommand{\emc}[1]{{\textbf{\textit{\color{Myblue}#1}}}}
\DeclareMathOperator{\erf}{erf}
\DeclareMathOperator{\sign}{sign}
\newcommand{\qed}{\hfill \ensuremath{\Box}} 
\newtheorem{theorem}{Theorem}
\newtheorem{lemma}[theorem]{Lemma}
\newtheorem{proposition}[theorem]{Proposition}
\newenvironment{proof}[1][Proof]{\begin{trivlist}
\item[\hskip \labelsep {\bfseries #1}]}{\end{trivlist}}
\newenvironment{example}[1][Example]{\begin{trivlist}
\item[\hskip \labelsep {\bfseries #1}]}{\end{trivlist}}
\begin{document}

\date{}

\author{Hossein Mobahi \\
Computer Science \& Artificial Intelligence Lab.\\
Massachusetts Institute of Technology\\
Cambridge, MA, USA \\
\color{magenta}\texttt{hmobahi@csail.mit.edu} \\
}

\title{Closed Form for Some Gaussian Convolutions}

\maketitle

\begin{abstract}
The convolution of a function with an isotropic Gaussian appears in many contexts such as differential equations, computer vision, signal processing, and numerical optimization. Although this convolution does not always have a closed form expression, there are important family of functions for which closed form exists. This article investigates some of such cases.
\end{abstract}

\section{Introduction}

\paragraph{\bf Motivation.} The convolution of a function with an isotropic Gaussian, aka \emc{Weierstrass transform}, appears in many contexts. In differential equations, it appears as the solution of diffusion equations \cite{Widder75}. In computer vision and image processing, it is used to blur images for generating multi-scale or scale-space representations \cite{scalespace}. In addition, derivatives of the blurred images are used for defining visual operations and features such as in SIFT \cite{SIFT}. In signal processing, it defines filters for noise reduction \cite{filtering} and preventing aliasing in sampled data \cite{aliasing}. Finally, in optimization theory, it is used to simplify a nonconvex objective function by suppressing spurious critical points \cite{GaussConvEnv15}.

\paragraph{\bf Numerical Approximation.} In general, the Weierstrass transform does not have a closed form expression. When that is the case, the transformation needs to be computed or approximated \emc{numerically}. Recent efforts, mainly motivated from optimization applications, have resulted in efficient algorithms for such numerical approximations, e.g. by adaptive importance sampling \cite{approxconv}.

\paragraph{\bf Exact Closed Form.} Putting the general arguments aside, there are important family of functions for which the Weierstrass transform does have a closed form. This article investigates two such families, namely, multivariate polynomials and Gaussian radial basis functions. These two classes are rich, as they are able to approximate any arbitrary function (under some mild conditions), up to a desired accuracy. In addition, we present a result which provides the closed form Weierstrass transform of a multivariate function, given the closed form transform of its univariate cases. 


\paragraph{\bf Notation.} We use $x$ for scalars, $\boldsymbol{x}$ for vectors, $\| \,.\,$ for the $\ell_2$ norm, and $i$ for $\sqrt{-1}$. We also use $\triangleq$ for equality by definition. We define the isotropic Gaussian kernel $k_\sigma(\boldsymbol{x})$ as below.

\begin{equation}
k_\sigma(\boldsymbol{x}) \triangleq (\sqrt{2 \pi} \sigma)^{-\dim(\boldsymbol{x})} e^{-\frac{\| \boldsymbol{x}\|^2}{2 \sigma^2}} \,. \nonumber
\end{equation}

\section{Closed Form Expressions}

\subsection{Polynomials}

Given a multivariate polynomial function $f:\mathbb{R}^n \rightarrow \mathbb{R}$. We show that the convolution of $f$ with $k_\sigma$ has a closed form expression. Suppose $f$ consists of $m$ monomials,

\begin{equation}
f(\boldsymbol{x}) \triangleq \sum_{j=1}^m a_j f_j(\boldsymbol{x})\,, \nonumber
\end{equation}

where $f_j$ is the $j$'th term and $a_j$ is its coefficient. Each monomial has the form  $f_j(\boldsymbol{x}) \triangleq \Pi_{d=1}^n x_d^{p_{d,j}}$ with $x_d$ being the $d$'th component of the vector $\boldsymbol{x}$, and $p_{d,j}$ being a nonnegative integer exponent of $x_d$ in $f_j$. Then we have,

\begin{equation}
[f \star k_\sigma] (\boldsymbol{x}) = \sum_{j=1}^m a_j \big([f_j \star k_\sigma] (\boldsymbol{x}) \big) = \sum_{j=1}^m a_j \Pi_{d=1}^n \Big([\Box_d^{p_{d,j}} \star k_\sigma](x_d) \Big) = \sum_{j=1}^m a_j \Pi_{d=1}^n u(x_d,p_{d,j},\sigma) \,. \nonumber
\end{equation}

The function $u$ has the following form (see the Appendix for the proof),

\begin{equation}
u(x,p,\sigma) \triangleq [\Box^p \star k_\sigma](x) = (i \sigma)^p \,\, h_p(\frac{x}{i \sigma})\,, \nonumber
\end{equation}

where $h_p$ is the $p$'th \emc{Hermite polynomial}, i.e.,

\begin{equation}
h_p(x) \triangleq (-1)^p e^{\frac{x^2}{2}} \frac{d^p}{d x^p} e^{-\frac{x^2}{2}} \,. \nonumber 
\end{equation}

The following table lists $u(x,p,\sigma)$ for $0 \leq p \leq 10$.

\begin{center}
\begin{tabular}{|l|l|}
\hline
$p$ & $u(x,p,\sigma)$ \\
\hline
0 & $1$ \\
1 & $x$ \\
2 & $\sigma^2+x^2$ \\
3 & $3\sigma^2x + x^3$ \\
4 & $3 \sigma^4 + 6 \sigma^2 x^2 + x^4$ \\
5 & $15 \sigma^4 x + 10 \sigma^2 x^3 + x^5$ \\
6 & $15 \sigma^6 + 45 \sigma^4 x^2 + 15 \sigma^2 x^4 + x^6$ \\
7 & $105 \sigma^6 x + 105 \sigma^4 x^3 + 21 \sigma^2 x^5 + x^7$ \\
8 & $105 \sigma^8 + 420 \sigma^6 x^2 + 210 \sigma^4 x^4 + 28 \sigma^2 x^6 + x^8$ \\
9 & $945 \sigma^8 x + 1260 \sigma^6 x^3 + 378 \sigma^4 x^5 + 36 \sigma^2 x^7 + x^9$ \\
10 & $945 \sigma^{10} + 4725 \sigma^8 x^2 + 3150 \sigma^6 x^4 + 630 \sigma^4 x^6 + 
 45 \sigma^2 x^8 + x^{10}$ \\
\hline
\end{tabular}
\end{center}

\paragraph{\bf Example} Let $f(x,y) \triangleq x^2 y^3 + \lambda( x^4+y^4)$. Then, using the table, $[f \star k_\sigma](x,y)$ has the following form,

\begin{equation}
[f \star k_\sigma] = (\sigma^2+x^2)  (3\sigma^2y + y^3) + \lambda( 3 \sigma^4 + 6 \sigma^2 x^2 + x^4 + 3 \sigma^4 + 6 \sigma^2 y^2 + y^4 ) \,. \nonumber
\end{equation}

\subsection{Gaussian Radial Basis Functions}

Isotropic Gaussian Radial-Basis-Functions (RBFs) are general function approximators \cite{RBF90}. That is, any function can be approximated to a desired accuracy by the following form,

\begin{equation}
f(\boldsymbol{x}) \triangleq \sum_{j=1}^m a_j e^{-\frac{\|\boldsymbol{x}-\boldsymbol{x}_j\|^2 }{2 \delta_j^2}}\,, \nonumber
\end{equation}

where $\boldsymbol{x}_j \in \mathbb{R}^n$ is the center and $\delta_j \in \mathbb{R}_{+}$ is the width of the $j$'th basis function, for $j=1,\dots,m$. We have shown in \cite{blur} that the convolution of $f$ with $k_\sigma$ has the following form,

\begin{equation}
[f \star k_\sigma](\boldsymbol{x}) = \sum_{j=1}^m a_j (\frac{ \delta_j }{\sqrt{\delta_j^2+\sigma^2}})^n e^{- \frac{\| \boldsymbol{x} - \boldsymbol{x}_j\|^2 }{2 (\delta_j^2+\sigma^2)}} \,. \nonumber
\end{equation}

In order to be self-contained, we include our proof as presented in \cite{blur} in the appendix of this article.

\subsection{Trigonometric Functions}
\label{sec:trig}
\begin{center}
\begin{tabular}{|l|l|}
\hline
$\sin(x)$ & $e^{-\frac{\sigma^2}{2}} \sin(x)$ \\
$\cos(x)$ & $e^{-\frac{\sigma^2}{2}} \cos(x)$ \\
$\sin^2(x)$ & $\frac{1}{2} \big( 1-e^{-2 \sigma^2} \cos(2 x) \big)$ \\
$\cos^2(x)$ & $\frac{1}{2} \big( 1+e^{-2 \sigma^2} \cos(2 x) \big)$ \\
$\sin(x) \cos(x)$ & $e^{-2 \sigma^2} \sin(x) \cos(x)$ \\
\hline
\end{tabular}
\end{center}

\begin{example}
Let $f(x,y) \triangleq \big( 2 \cos(x) - 3 \sin(y) + 4 \cos(x) \sin(y) - 8 \sin(x) \cos(y) + 4\big)^2$. Compute $[f \star k_\sigma](x,y)$.
\end{example}

We expand the quadratic form and then use the table, as shown below.

\begin{eqnarray}
& & \big( 2 \cos(x) - 3 \sin(y) + 4 \cos(x) \sin(y) - 8 \sin(x) \cos(y) + 4\big)^2 \\
&=& 4 \cos^2(x) + 9 \sin^2(y) + 16 \cos^2(x) \sin^2(y) + 64 \sin^2(x) \cos^2(y) + 16\\
& &  -12 \cos(x) \sin(y) + 16 \cos^2(x) \sin(y) - 32 \cos(x) \sin(x) \cos(y)\\
& & + 16 \cos(x) -24 \sin(y) \cos(x) +48 \sin(y) \sin(x) \cos(y) -24 \sin(y)\\
& & -64 \cos(x) \sin(y) \sin(x) + 32 \cos(x) \sin(y) -64 \sin(x) \cos(y)
\end{eqnarray}

\subsection{Functions with Linear Argument}

This result tells us about the form of $[f \big({\Box}^T \boldsymbol{x} \big) \star k_\sigma(\Box)] (\boldsymbol{w})$ given $[f \star k_\sigma] (y)$. 

{\bf Lemma 1} {\it
Let $f:\mathbb{R} \rightarrow \mathbb{R}$ have well-defined Gaussian convolution $\tilde{f} ( y \,;\, \sigma) \triangleq [f \star k_\sigma] (y)$. Let $\boldsymbol{x}$ and $\boldsymbol{w}$ be any real vectors of the same length. Then $[f \big({\Box}^T \boldsymbol{x} \big) \star k_\sigma(\Box)] (\boldsymbol{w}) = \tilde{f} ( \boldsymbol{w}^T \boldsymbol{x} \,;\, \sigma \| \boldsymbol{x}\|)$.}

We now use this result in the following examples. These examples are related to the earlier Table about the smoothed response functions.
 
\paragraph{\bf Example} Suppose $f(y) \triangleq \sign(y)$. It is easy to check that $\tilde{f} (y;\sigma) = \erf(\frac{y}{\sqrt{2}\sigma})$. Applying the lemma to this implies that $[\sign ({\Box}^T \boldsymbol{x} ) \star k_\sigma(\Box)] (\boldsymbol{w}) = \erf(\frac{\boldsymbol{w}^T \boldsymbol{x}}{\sqrt{2}\sigma \| \boldsymbol{x}\|})$.

\paragraph{\bf Example} Suppose $f(y) \triangleq \max(0,y)$. It is easy to check that $\tilde{f} (y;\sigma) = \frac{\sigma}{\sqrt{2 \pi}} e^{-\frac{y^2}{2 \sigma^2} } + \frac{1}{2} y \big (1+\erf(\frac{y}{\sqrt{2}\sigma}) \big)$. Applying the lemma to this implies that $[\max (0,  {\Box}^T \boldsymbol{x}) \star k_\sigma(\Box)] (\boldsymbol{w}) = \frac{\sigma \| \boldsymbol{x}\|}{\sqrt{2 \pi}} e^{-\frac{(\boldsymbol{w}^T \boldsymbol{x})^2}{2 \sigma^2 \| \boldsymbol{x}\|^2} } + \frac{1}{2} \boldsymbol{w}^T \boldsymbol{x} \big (1+\erf(\frac{\boldsymbol{w}^T \boldsymbol{x}}{\sqrt{2}\sigma \| \boldsymbol{x}\|}) \big)$.

\paragraph{\bf Example} Suppose $f(y) \triangleq \sin(y)$. From trigonometric table in \ref{sec:trig} we know $\tilde{f}(y; \sigma) = e^{-\frac{\sigma^2}{2}} \sin(y)$. Applying the lemma to this implies that $[\sin(\Box^T \boldsymbol{x}) \star k_\sigma(\Box)] (\boldsymbol{w}) = e^{-\frac{\sigma^2 \|\boldsymbol{x}\|^2}{2}} \sin(\boldsymbol{w}^T \boldsymbol{x})$. In particular, when $\boldsymbol{x} \triangleq (x,1)$, then smoothed version of $\sin(w_1 x + w_2)$ w.r.t. $\boldsymbol{w}$ is $e^{-\frac{\sigma^2 (1+x^2)}{2}} \sin(w_1 x + w_2)$.

\bibliography{closed}
\bibliographystyle{apalike}

\newpage~\newpage~

\noindent {\bf \LARGE Appendices}
\bigskip

\appendix

\section{Polynomials}

Here we prove that $[\Box^p \star k_\sigma](x) = (i \sigma)^p \,\, h_p(\frac{x}{i \sigma})$, where $i \triangleq \sqrt{-1}$ and $h_p$ is the $p$'th {\em Hermite} polynomial $h_p(x) \triangleq (-1)^p e^{\frac{x^2}{2}} \frac{d^p}{d x^p} e^{-\frac{x^2}{2}}$. Using Fourier Transform $F$ we proceed as below,

\begin{eqnarray}
& & [\Box^p \star k_\sigma](x) \\
&=& F^{-1} \{ F \{ [\Box^p \star k_\sigma](x) \} \} \\
&=& F^{-1} \{ F \{ x^p \} F\{ k_\sigma(x) \} \} \\
\label{eq:poly_fourier}
&=& F^{-1} \{ \Big( \sqrt{2 \pi} (-i)^p \delta^{(p)}(\omega) \Big) \,\, \Big( \frac{1}{\sqrt{2 \pi}}e^{-\frac{1}{2} \sigma^2 \omega^2} \Big) \} \\
&=& \sqrt{2 \pi} (-i)^p  F^{-1} \{ \Big( \delta^{(p)}(\omega) \Big) \,\, \Big( \frac{1}{\sqrt{2 \pi}}e^{-\frac{1}{2} \sigma^2 \omega^2} \Big) \} \\
&=& \sqrt{2 \pi} (-i)^p  \int_\mathbb{R} \delta^{(p)}(\omega) \,\,  \frac{1}{\sqrt{2 \pi}}e^{-\frac{1}{2} \sigma^2 \omega^2} \,\, e^{i \omega x} \,\, d \omega \\
&=& \sqrt{2 \pi} (-i)^p  \Big(\frac{d^p}{d \omega^p} \big(  \frac{1}{\sqrt{2 \pi}}e^{-\frac{1}{2} \sigma^2 \omega^2 + i \omega x} \big) \Big)_{\omega=0}\\
&=& \sqrt{2 \pi} (-i)^p  \Big(\frac{d^p}{d \omega^p} \big(  \frac{1}{\sqrt{2 \pi}}e^{-\frac{1}{2} \sigma^2 \omega^2 + i \omega x} \big) \Big)_{\omega=0}\\
&=& (-i)^p  \Big(\frac{d^p}{d \omega^p} \big( e^{-\frac{1}{2} \sigma^2 \omega^2 + i \omega x} \big) \Big)_{\omega=0}\\
\label{eq:complete_square}
&=& (-i)^p  \Big(\frac{d^p}{d \omega^p} \big( e^{-\frac{x^2}{2 \sigma^2} - \frac{(\omega - \frac{1}{\sigma^2} i x)^2}{\frac{2}{\sigma^2}}} \big) \Big)_{\omega=0}\\
&=& (-i)^p e^{-\frac{x^2}{2 \sigma^2}} \Big(\frac{d^p}{d \omega^p} \big( e^{ - \frac{(\omega - \frac{1}{\sigma^2} i x)^2}{\frac{2}{\sigma^2}}} \big) \Big)_{\omega=0}\\
\label{eq:hermite}
&=& (-i)^p e^{-\frac{x^2}{2 \sigma^2}} \Big(h_p(\sigma(\omega-\frac{1}{\sigma^2} i x)) (-\sigma)^{p} e^{-\frac{\sigma^2 (\omega-\frac{1}{\sigma^2} i x)^2}{2}} \Big)_{\omega=0}\\
&=& (i \sigma)^p h_p(\frac{x}{i \sigma} ) \,.
\end{eqnarray}

In (\ref{eq:poly_fourier}) we use the fact that $F\{x^p\}=\sqrt{2 \pi} (-i)^p \delta^{(p)}(\omega)$, where $\delta^{(p)}$ denotes the $p$'th derivative of Dirac's delta function. In (\ref{eq:complete_square}) we use completing the square. Finally, note that change of variable $x \rightarrow (x-\mu)/\alpha$ in the Hermite polynomials leads to $h_p(\frac{x-\mu}{\alpha}) \triangleq (-\alpha)^p e^{\frac{(x-\mu)^2}{2 \alpha^2}} \frac{d^p}{d x^p} e^{-\frac{(x-\mu)^2}{2 \alpha^2}}$. In particular, by setting $\alpha = 1/\sigma$ and $\mu=(ix)/\sigma^2$ we obtain $h_p(\sigma(\omega-\frac{1}{\sigma^2} i x)) (-\sigma)^{p} e^{-\frac{\sigma^2 (\omega-\frac{1}{\sigma^2} i x)^2}{2}}  = \frac{d^p}{d \omega^p} e^{-\frac{(\omega-\frac{1}{\sigma^2} i x)^2}{2 \frac{1}{\sigma^2}}}$, which is used in (\ref{eq:hermite}).

\newpage

\section{Gaussian Radial Basis Functions}

In order to prove our main claim, we will need the following proposition.

\paragraph{\bf Proposition 0} {\it The following identity holds for the product of two Gaussians.}

\begin{equation}
k(\boldsymbol{x} - \boldsymbol{\mu}_1 ; \sigma_1^2) \,\, k(\boldsymbol{x} - \boldsymbol{\mu}_2 ; \sigma_2^2) =  \frac{e^{- \frac{\| \boldsymbol{\mu}_1- \boldsymbol{\mu}_2 \|^2 }{2 (\sigma_1^2+\sigma_2^2)}}}{(\sqrt{2\pi(\sigma_1^2+\sigma_2^2)})^m}  k(\boldsymbol{x} - \frac{\sigma_2^2 \boldsymbol{\mu}_1 + \sigma_1^2 \boldsymbol{\mu}_2} {\sigma_1^2+\sigma_2^2}  ; \frac{\sigma_1^2 \sigma_2^2} {\sigma_1^2+\sigma_2^2} ) \nonumber \,.
\end{equation}

\begin{proof}
\begin{eqnarray}
& & k(\boldsymbol{x} - \boldsymbol{\mu}_1 ; \sigma_1^2) \,\, k(\boldsymbol{x} - \boldsymbol{\mu}_2 ; \sigma_2^2) \nonumber\\
&=& \frac{1}{(\sigma_1\sqrt{2\pi})^m} e^{-\frac{\|\boldsymbol{x} - \boldsymbol{\mu}_1\|^2 }{2 \sigma_1^2}}\,\,  \frac{1}{(\sigma_2\sqrt{2\pi})^m} e^{-\frac{\|\boldsymbol{x} - \boldsymbol{\mu}_2\|^2 }{2 \sigma_2^2}} \nonumber \\
&=& \frac{1}{(2\pi \sigma_1 \sigma_2 )^m} e^{-\frac{\|\boldsymbol{x} - \boldsymbol{\mu}_1\|^2 }{2 \sigma_1^2}-\frac{\|\boldsymbol{x} - \boldsymbol{\mu}_2\|^2 }{2 \sigma_2^2}} \nonumber \\
\label{eq:completing_the_square2}
&=& \frac{1}{(2\pi \sigma_1 \sigma_2 )^m} e^{-\frac{\|\boldsymbol{x} - \frac{\sigma_1^2 \sigma_2^2} {\sigma_1^2+\sigma_2^2} (\frac{\boldsymbol{\mu}_1}{\sigma_1^2} + \frac{\boldsymbol{\mu}_2}{\sigma_2^2} )\|^2 }{2 \frac{\sigma_1^2 \sigma_2^2} {\sigma_1^2+\sigma_2^2}} - \frac{\| \boldsymbol{\mu}_1- \boldsymbol{\mu}_2 \|^2 }{2 (\sigma_1^2+\sigma_2^2)}  } \\
&=& \frac{e^{- \frac{\| \boldsymbol{\mu}_1- \boldsymbol{\mu}_2 \|^2 }{2 (\sigma_1^2+\sigma_2^2)}}}{(\sqrt{2\pi(\sigma_1^2+\sigma_2^2)})^m}  k(\boldsymbol{x} - \frac{\sigma_2^2 \boldsymbol{\mu}_1 + \sigma_1^2 \boldsymbol{\mu}_2} {\sigma_1^2+\sigma_2^2}  ; \frac{\sigma_1^2 \sigma_2^2} {\sigma_1^2+\sigma_2^2} ) \nonumber \,.
\end{eqnarray}

Note that (\ref{eq:completing_the_square2}) is derived by completing the square.

\qed

\end{proof}

We are now ready to state the main claim in the following lemma.

\begin{lemma}
Suppose $f=\sum_{j=1}^p a_j e^{-\frac{\|\boldsymbol{x}-\boldsymbol{x}_j\|^2 }{2 \delta_j^2}}$. Then the following identity holds.

\begin{equation}
[f \star k_\sigma] (\boldsymbol{x}) = \sum_{j=1}^p a_j (\frac{ \delta_j }{\sqrt{\delta_j^2+\sigma^2}})^n e^{- \frac{\| \boldsymbol{x} - \boldsymbol{x}_j\|^2 }{2 (\delta_j^2+\sigma^2)}} \,. \nonumber
\end{equation}
\end{lemma}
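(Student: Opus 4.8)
The plan is to reduce the multivariate convolution to a product of one-dimensional Gaussian integrals and then recognize each factor as a product of two Gaussians, to which Proposition 0 applies. By linearity of convolution it suffices to treat a single RBF term $g(\boldsymbol{x}) \triangleq e^{-\|\boldsymbol{x}-\boldsymbol{x}_j\|^2/(2\delta_j^2)}$; the general claim follows by summing against the coefficients $a_j$. So I would fix $j$, write $\delta \triangleq \delta_j$ and $\boldsymbol{c} \triangleq \boldsymbol{x}_j$, and compute $[g \star k_\sigma](\boldsymbol{x}) = \int_{\mathbb{R}^n} e^{-\|\boldsymbol{y}-\boldsymbol{c}\|^2/(2\delta^2)}\, k_\sigma(\boldsymbol{x}-\boldsymbol{y})\, d\boldsymbol{y}$.

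First I would make the kernel appearances uniform by noting $e^{-\|\boldsymbol{y}-\boldsymbol{c}\|^2/(2\delta^2)} = (\sqrt{2\pi}\,\delta)^n\, k_\delta(\boldsymbol{y}-\boldsymbol{c})$, so that the integrand becomes $(\sqrt{2\pi}\,\delta)^n\, k_\delta(\boldsymbol{y}-\boldsymbol{c})\, k_\sigma(\boldsymbol{y}-\boldsymbol{x})$ (using symmetry of $k_\sigma$ in its argument). Now apply Proposition 0 with $\boldsymbol{\mu}_1 = \boldsymbol{c}$, $\sigma_1^2 = \delta^2$, $\boldsymbol{\mu}_2 = \boldsymbol{x}$, $\sigma_2^2 = \sigma^2$ (here the ambient dimension is $n$, so I would invoke the proposition with $m=n$). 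This rewrites the product of the two Gaussians in $\boldsymbol{y}$ as a constant, namely $\big(\sqrt{2\pi(\delta^2+\sigma^2)}\big)^{-n} e^{-\|\boldsymbol{c}-\boldsymbol{x}\|^2/(2(\delta^2+\sigma^2))}$, times a single normalized Gaussian in $\boldsymbol{y}$ centered at the weighted mean with variance $\delta^2\sigma^2/(\delta^2+\sigma^2)$.

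The final step is to integrate over $\boldsymbol{y} \in \mathbb{R}^n$: since $k(\,\cdot\,;\,\tau^2)$ integrates to $1$, the $\boldsymbol{y}$-integral of that single Gaussian is exactly $1$, leaving
\begin{equation}
[g \star k_\sigma](\boldsymbol{x}) = (\sqrt{2\pi}\,\delta)^n \,\frac{e^{-\|\boldsymbol{x}-\boldsymbol{c}\|^2/(2(\delta^2+\sigma^2))}}{\big(\sqrt{2\pi(\delta^2+\sigma^2)}\big)^n} = \Big(\frac{\delta}{\sqrt{\delta^2+\sigma^2}}\Big)^n e^{-\frac{\|\boldsymbol{x}-\boldsymbol{x}_j\|^2}{2(\delta^2+\sigma^2)}}\,. \nonumber
\end{equation}
Multiplying by $a_j$ and summing over $j$ gives the stated formula. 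The only mild subtlety — and the step I would be most careful about — is bookkeeping of the normalization constants: Proposition 0 is stated for normalized Gaussians $k(\,\cdot\,;\,\sigma^2)$, whereas the RBF term and $k_\sigma$ use slightly different conventions ($k_\sigma$ is written with $\sigma$ rather than $\sigma^2$ in its subscript), so I would keep explicit track of the powers of $\sqrt{2\pi}$ and of $\delta,\sigma$ when converting between the two forms. Everything else is a direct application of Proposition 0 together with the fact that a probability density integrates to one; there is no real obstacle beyond this routine arithmetic.
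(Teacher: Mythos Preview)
Your proposal is correct and follows essentially the same route as the paper's own proof: reduce by linearity to a single RBF term, rewrite $e^{-\|\boldsymbol{y}-\boldsymbol{x}_j\|^2/(2\delta_j^2)}=(\sqrt{2\pi}\,\delta_j)^n k_{\delta_j}(\boldsymbol{y}-\boldsymbol{x}_j)$, apply Proposition~0 to the product of Gaussians, and then use that the resulting normalized Gaussian in $\boldsymbol{y}$ integrates to $1$. The only difference is presentational (you treat a single term and then sum, while the paper carries the sum throughout), and your caution about the two normalization conventions is exactly the bookkeeping the paper handles implicitly.
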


\begin{proof}
\begin{eqnarray}
[f \star k_\sigma] (\boldsymbol{x}) &\triangleq& \int_{\mathbb{R}^n} f(\boldsymbol{y})  k_\sigma(\boldsymbol{x}-\boldsymbol{y}) \,d \boldsymbol{y} \nonumber \\
&=& \int_{\mathbb{R}^n} \Big( \sum_{j=1}^p a_j e^{- \frac{\| \boldsymbol{y} - \boldsymbol{x}_j \|^2 }{2 \delta_j^2}} \Big)  k_\sigma(\boldsymbol{x}-\boldsymbol{y}) \,d \boldsymbol{y} \nonumber \\
&=& \sum_{j=1}^p a_j  \Big( \int_{\mathbb{R}^n}  e^{- \frac{\| \boldsymbol{y} - \boldsymbol{x}_j \|^2 }{2 \delta_j^2}} k_\sigma(\boldsymbol{x}-\boldsymbol{y}) \,d \boldsymbol{y} \Big) \nonumber \\
&=& \sum_{j=1}^p a_j ( \delta_j \sqrt{2 \pi} )^n \Big( \int_{\mathbb{R}^n} k_{\delta_j}( \boldsymbol{y} - \boldsymbol{x}_j) k_\sigma(\boldsymbol{x}-\boldsymbol{y}) \,d \boldsymbol{y} \Big) \nonumber \\
\label{eq:use_gaussian_product1}
&=& \sum_{j=1}^p a_j ( \delta_j \sqrt{2 \pi} )^n \Big( \int_{\mathbb{R}^n}  \frac{e^{- \frac{\| \boldsymbol{x}_j- \boldsymbol{x} \|^2 }{2 (\delta_j^2+\sigma^2)}}}{(\sqrt{2\pi(\delta_j^2+\sigma^2)})^n}  k_{\sqrt{\frac{\delta_j^2 \sigma^2} {\delta_j^2+\sigma^2}}}(\boldsymbol{y} - \frac{\sigma^2 \boldsymbol{x}_j + \delta_j^2 \boldsymbol{x}} {\delta_j^2+\sigma^2} ) \,d \boldsymbol{y} \Big)    \\
&=&  \sum_{j=1}^p a_j (\frac{ \delta_j }{\sqrt{\delta_j^2+\sigma^2}})^n e^{- \frac{\| \boldsymbol{x}_j- \boldsymbol{x} \|^2 }{2 (\delta_j^2+\sigma^2)}}  \Big( \int_{\mathbb{R}^n} k_{\sqrt{\frac{\delta_j^2 \sigma^2} {\delta_j^2+\sigma^2}}} (\boldsymbol{y} - \frac{\sigma^2 \boldsymbol{x}_j + \delta_j^2 \boldsymbol{x}} {\delta_j^2+\sigma^2} ) \,d \boldsymbol{y} \Big) \nonumber \\
&=& \sum_{j=1}^p a_j (\frac{ \delta_j }{\sqrt{\delta_j^2+\sigma^2}})^n e^{- \frac{\| \boldsymbol{x}_j- \boldsymbol{x} \|^2 }{2 (\delta_j^2+\sigma^2)}} \nonumber \,,
\end{eqnarray}

where in (\ref{eq:use_gaussian_product1}) we use the Gaussian product result from proposition 0.
\qed
\end{proof}

\newpage

\section{Functions with Linear Argument}

In order to prove the lemma, we first need the following proposition.

\begin{proposition}
The following identity holds,

\begin{equation}
[k_\delta(a \,\Box\,+b) \,\star\, k_\sigma (\,\Box\,) ] (x) = k_{\sqrt{\delta^2+a^2 \sigma^2}}(a x+b)\,. \nonumber
\end{equation}

\end{proposition}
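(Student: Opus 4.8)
The plan is to reduce the claimed identity to a one-dimensional Gaussian integral and evaluate it by completing the square, exactly in the spirit of the polynomial and RBF proofs in the appendices. Writing out the convolution on the left,
\begin{equation}
[k_\delta(a\,\Box\,+b)\star k_\sigma(\Box)](x) = \int_{\mathbb{R}} k_\delta(a y + b)\, k_\sigma(x-y)\, dy \nonumber\,,
\end{equation}
I would first expand both Gaussians explicitly using the definition of $k_\sigma$, collecting the exponent into a single quadratic in $y$ of the form $-\frac{1}{2}\big(\frac{(ay+b)^2}{\delta^2} + \frac{(x-y)^2}{\sigma^2}\big)$. The coefficient of $y^2$ is $\frac{1}{2}\big(\frac{a^2}{\delta^2}+\frac{1}{\sigma^2}\big) = \frac{a^2\sigma^2+\delta^2}{2\delta^2\sigma^2}$, which is what will ultimately produce the width $\sqrt{\delta^2+a^2\sigma^2}$ in the answer.

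Next I would complete the square in $y$: write the exponent as $-\frac{1}{2}A(y-y_\ast)^2 + C$, where $A = \frac{a^2\sigma^2+\delta^2}{\delta^2\sigma^2}$ and $y_\ast$, $C$ are the usual byproducts. The Gaussian integral $\int_{\mathbb{R}} e^{-\frac{1}{2}A(y-y_\ast)^2}\,dy = \sqrt{2\pi/A}$ disposes of the $y$-dependence, and the normalization constants $(\sqrt{2\pi}\,\delta)^{-1}(\sqrt{2\pi}\,\sigma)^{-1}$ out front combine with $\sqrt{2\pi/A}$ to give exactly $(\sqrt{2\pi}\sqrt{\delta^2+a^2\sigma^2})^{-1}$, the normalization of $k_{\sqrt{\delta^2+a^2\sigma^2}}$. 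It then remains to check that the leftover constant $C$ equals $-\frac{(ax+b)^2}{2(\delta^2+a^2\sigma^2)}$; this is the one genuinely fiddly algebraic computation — tracking the cross terms $\frac{b^2}{\delta^2}$, $\frac{x^2}{\sigma^2}$, and the $-\frac{1}{2}A y_\ast^2$ term and seeing them collapse into a perfect square in $(ax+b)$. A cleaner alternative that sidesteps the brute-force bookkeeping is to invoke Proposition 0 (the product-of-two-Gaussians identity) after rewriting $k_\delta(ay+b) = \frac{1}{|a|} k_{\delta/|a|}\big(y + \frac{b}{a}\big)$, so the integrand becomes a constant times a product $k_{\delta/|a|}(y - \mu_1)\,k_\sigma(y - \mu_2)$ with $\mu_1 = -b/a$, $\mu_2 = x$; Proposition 0 turns this into a single normalized Gaussian in $y$ (which integrates to $1$) times the scalar factor $e^{-\|\mu_1-\mu_2\|^2/(2(\sigma_1^2+\sigma_2^2))}/\sqrt{2\pi(\sigma_1^2+\sigma_2^2)}$, and substituting $\sigma_1^2 = \delta^2/a^2$, $\sigma_2^2 = \sigma^2$ yields the claim directly after simplifying $\frac{1}{|a|}\cdot\frac{1}{\sqrt{2\pi(\delta^2/a^2+\sigma^2)}} = \frac{1}{\sqrt{2\pi(\delta^2+a^2\sigma^2)}}$ and $\frac{(x+b/a)^2}{\delta^2/a^2+\sigma^2} = \frac{(ax+b)^2}{\delta^2+a^2\sigma^2}$.

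The main obstacle is purely the algebraic simplification of the residual exponent $C$ into the stated squared form — there is no conceptual difficulty, no convergence subtlety (everything is a genuine absolutely convergent Gaussian integral for $\delta,\sigma>0$), and no need to worry about the sign of $a$ as long as I consistently use $|a|$ in the normalization. I would present the Proposition 0 route as the primary argument since it offloads the completing-the-square step onto an already-proved lemma, and only fall back on the direct computation if the substitution bookkeeping turns out to be less transparent to the reader. One edge case worth a remark: if $a=0$ the left side is $k_\delta(b)$ times the integral of $k_\sigma$, i.e.\ just the constant $k_\delta(b)$, which matches $k_{\sqrt{\delta^2}}(b) = k_\delta(b)$, so the formula extends continuously to $a=0$.
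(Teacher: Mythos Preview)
Your proposal is correct, and both routes you describe (direct completing-the-square, or rescaling $k_\delta(ay+b)=\frac{1}{|a|}k_{\delta/|a|}(y+b/a)$ and invoking Proposition~0) go through cleanly. The paper, however, takes a rather different path: it \emph{posits} a closed-form antiderivative for the integrand $k_\delta(at+b)\,k_\sigma(x-t)$ in terms of $\erf$, namely $\frac{1}{2}k_{\sqrt{\delta^2+a^2\sigma^2}}(ax+b)\,\erf\!\big(\tfrac{a\sigma^2(at+b)+\delta^2(t-x)}{\delta\sigma\sqrt{2(\delta^2+a^2\sigma^2)}}\big)$, verifies it by differentiating in $t$ and checking that the result collapses back to the product of the two Gaussians, and then evaluates the definite integral using $\erf(\pm\infty)=\pm 1$. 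Your approach is more economical and conceptually standard---it treats the problem as the Gaussian integral it is, and the Proposition~0 route in particular reuses already-proved machinery so that no new completing-the-square computation is needed. The paper's approach buys an explicit antiderivative (potentially useful if one wanted the convolution truncated to a finite interval), but at the cost of having to guess that antiderivative and then carry out a somewhat opaque algebraic verification. Your handling of the $a=0$ edge case is also a nice touch that the paper does not address.
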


\begin{proof}

We first claim that the following identity is true,

\begin{equation}
\label{eq:claim}
\int k_\delta(a t+b) \, k_\sigma (x-t) \,dt = \frac{1}{2} \, k_{\sqrt{\delta^2+a^2 \sigma^2}}(a x+b) \, \erf( \frac{a \sigma^2(at+b) + \delta^2(t-x) }{\delta \sigma \sqrt{2(\delta^2 + a^2 \sigma^2)}} ) + f(a,b,x)\,.
\end{equation}

If the claim holds, the proposition can be easily proved as below,

\begin{eqnarray}
& & [k_\delta(a \,\Box\,+b) \,\star\, k_\sigma (\,\Box\,) ] (x) \\
&=& \int_{-\infty}^{\infty} k_\delta(a t+b) \, k_\sigma (x-t) \,dt \\
&=& \lim_{t \rightarrow \infty}\int k_\delta(a t+b) \, k_\sigma (x-t) \,dt - \lim_{t \rightarrow -\infty}\int k_\delta(a t+b) \, k_\sigma (x-t) \,dt\\
&=& \lim_{t\rightarrow \infty} \frac{1}{2} \, k_{\sqrt{\delta^2+a^2 \sigma^2}}(a x+b) \, \erf( \frac{a \sigma^2(at+b) + \delta^2(t-x) }{\delta \sigma \sqrt{2(\delta^2 + a^2 \sigma^2)}} ) \\
& & - \lim_{t\rightarrow -\infty}\frac{1}{2} \, k_{\sqrt{\delta^2+a^2 \sigma^2}}(a x+b) \, \erf( \frac{a \sigma^2(at+b) + \delta^2(t-x) }{\delta \sigma \sqrt{2(\delta^2 + a^2 \sigma^2)}} )\\
\label{eq:erf_inf}
&=& \frac{1}{2} \, k_{\sqrt{\delta^2+a^2 \sigma^2}}(a x+b) \, (1) - \frac{1}{2} \, k_{\sqrt{\delta^2+a^2 \sigma^2}}(a x+b) \, (-1)\\
&=& k_{\sqrt{\delta^2+a^2 \sigma^2}}(a x+b)\,,
\end{eqnarray}

where (\ref{eq:erf_inf}) uses the fact that $\lim_{x \rightarrow \pm \infty}\erf(x)=\pm 1$. Thus, the following focuses on proving (\ref{eq:claim}), i.e. showing that $k_\delta(a t+b) \, k_\sigma (x-t)$ is anti-derivative of $\frac{1}{2} \, k_{\sqrt{\delta^2+a^2 \sigma^2}}(a x+b) \, \erf( \frac{a \sigma^2(at+b) + \delta^2(t-x) }{\delta \sigma \sqrt{2(\delta^2 + a^2 \sigma^2)}} )$. We do this by differentiating the latter and showing that it equals to the former.

\begin{eqnarray}
& & \frac{d}{dt} \frac{1}{2} \, k_{\sqrt{\delta^2+a^2 \sigma^2}}(a x+b) \, \erf( \frac{a \sigma^2(at+b) + \delta^2(t-x) }{\delta \sigma \sqrt{2(\delta^2 + a^2 \sigma^2)}} ) \\
&=& \frac{1}{2} \, k_{\sqrt{\delta^2+a^2 \sigma^2}}(a x+b) \, \frac{d}{dt} \, \erf( \frac{a \sigma^2(at+b) + \delta^2(t-x) }{\delta \sigma \sqrt{2(\delta^2 + a^2 \sigma^2)}} ) \\
\label{eq:erfp}
&=& \frac{1}{2} \, k_{\sqrt{\delta^2+a^2 \sigma^2}}(a x+b) \, \frac{\sqrt{2(\delta^2+a^2\sigma^2)}}{\sqrt{\pi}\delta \sigma} \, e^{-(\frac{a \sigma^2(at+b) + \delta^2(t-x) }{\delta \sigma \sqrt{2(\delta^2 + a^2 \sigma^2)}})^2} \\
&=& \frac{1}{2} \, \frac{1}{\sqrt{2\pi (\delta^2+a^2 \sigma^2)} } e^{-\frac{(ax+b)^2}{2(\delta^2+a^2 \sigma^2)} } \, \frac{\sqrt{2(\delta^2+a^2\sigma^2)}}{\sqrt{\pi}\delta \sigma} \, e^{-(\frac{a \sigma^2(at+b) + \delta^2(t-x) }{\delta \sigma \sqrt{2(\delta^2 + a^2 \sigma^2)}})^2} \\
&=& \frac{1}{\sqrt{2\pi}\delta} e^{-\frac{(at+b)^2}{2 \delta^2}} \, \frac{1}{\sqrt{2\pi}\sigma} e^{-\frac{(x-t)^2}{2 \sigma^2}}\\
&=& k_\delta(a t+b) \, k_\sigma (x-t) \,,
\end{eqnarray}

where in (\ref{eq:erfp}) we used the identity $\frac{d}{dx} \erf(x)=\frac{2}{\sqrt{\pi}} e^{-x^2}$.

\qed

\end{proof}

Now we are ready to prove the lemma.

\begin{lemma}
Let $f:\mathbb{R} \rightarrow \mathbb{R}$ have well-defined Gaussian convolution $\tilde{f} ( y \,;\, \sigma) \triangleq [f \star k_\sigma] (y)$. Let $\boldsymbol{x}$ and $\boldsymbol{w}$ be any real vectors of the same length. Then $[f \big({\Box}^T \boldsymbol{x} \big) \star k_\sigma(\Box)] (\boldsymbol{w}) = \tilde{f} ( \boldsymbol{w}^T \boldsymbol{x} \,;\, \sigma \| \boldsymbol{x}\|)$.\end{lemma}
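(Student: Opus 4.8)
The plan is to reduce the $n$-dimensional convolution to a one-dimensional one by splitting $\mathbb{R}^n$ into the line spanned by $\boldsymbol{x}$ and its orthogonal complement. First I would write out the definition
\[
[f(\Box^T \boldsymbol{x}) \star k_\sigma(\Box)](\boldsymbol{w}) = \int_{\mathbb{R}^n} f(\boldsymbol{t}^T \boldsymbol{x})\, k_\sigma(\boldsymbol{w} - \boldsymbol{t})\, d\boldsymbol{t},
\]
and use the fact that the isotropic Gaussian factors along any orthonormal basis. Choosing an orthonormal basis whose first vector is $\boldsymbol{x}/\|\boldsymbol{x}\|$, the integrand depends on $\boldsymbol{t}$ only through the coordinate $s \triangleq \boldsymbol{t}^T \boldsymbol{x}/\|\boldsymbol{x}\|$ in that first direction; the remaining $n-1$ Gaussian factors integrate to $1$. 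This collapses the integral to
\[
\int_{\mathbb{R}} f(\|\boldsymbol{x}\| s)\, k_{\sigma}\!\big(\tfrac{\boldsymbol{w}^T\boldsymbol{x}}{\|\boldsymbol{x}\|} - s\big)\, ds,
\]
where I have used that the component of $\boldsymbol{w}$ along $\boldsymbol{x}/\|\boldsymbol{x}\|$ is $\boldsymbol{w}^T\boldsymbol{x}/\|\boldsymbol{x}\|$ and the one-dimensional $k_\sigma$ appears because the Gaussian splits as a product of unit-variance-scaled factors.

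Next I would perform the substitution $u = \|\boldsymbol{x}\| s$, so $ds = du/\|\boldsymbol{x}\|$, turning the integral into $\frac{1}{\|\boldsymbol{x}\|}\int_{\mathbb{R}} f(u)\, k_\sigma\!\big(\frac{\boldsymbol{w}^T\boldsymbol{x} - u}{\|\boldsymbol{x}\|}\big)\, du$. The remaining task is to recognize the kernel: by the scaling identity $\frac{1}{|\alpha|} k_\sigma(z/\alpha) = k_{\sigma|\alpha|}(z)$ for one-dimensional Gaussians (which is just $\frac{1}{|\alpha|}\cdot\frac{1}{\sqrt{2\pi}\sigma} e^{-z^2/(2\sigma^2\alpha^2)} = \frac{1}{\sqrt{2\pi}\,\sigma|\alpha|} e^{-z^2/(2(\sigma|\alpha|)^2)}$), with $\alpha = \|\boldsymbol{x}\|$ and $z = \boldsymbol{w}^T\boldsymbol{x} - u$, the integrand becomes $f(u)\, k_{\sigma\|\boldsymbol{x}\|}(\boldsymbol{w}^T\boldsymbol{x} - u)$, whose integral over $u$ is precisely $[f \star k_{\sigma\|\boldsymbol{x}\|}](\boldsymbol{w}^T\boldsymbol{x}) = \tilde{f}(\boldsymbol{w}^T\boldsymbol{x}\,;\,\sigma\|\boldsymbol{x}\|)$, as claimed. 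Alternatively, one can invoke the Proposition just proved by taking $f$ to be a Gaussian RBF (or a limit/combination thereof), but I find the direct change-of-variables argument cleaner and fully general.

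The main obstacle I anticipate is purely bookkeeping rather than conceptual: one must handle the degenerate case $\boldsymbol{x} = \boldsymbol{0}$ separately (there the left side is $f(0)\int k_\sigma = f(0)$ and the right side is $\tilde{f}(0;0) = f(0)$, so it still holds, but the substitution $u = \|\boldsymbol{x}\|s$ is invalid), and one must be careful with the rotational-invariance step — justifying that the change of variables to the chosen orthonormal basis has unit Jacobian and that Fubini applies, which requires the ``well-defined'' hypothesis on $\tilde{f}$ to guarantee absolute integrability so the $n-1$ orthogonal integrals may be separated out. Once those technical points are dispatched, the computation is a two-line substitution.
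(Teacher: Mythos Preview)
Your argument is correct, but it proceeds along a genuinely different route from the paper's. The paper does not exploit rotational invariance at all: it instead writes $f(h(\boldsymbol{t}))=\int_\mathbb{R} f(y)\,\delta\big(y-h(\boldsymbol{t})\big)\,dy$, swaps the order of integration, and then replaces $\delta$ by a limit of narrow Gaussians $k_\epsilon$. The inner integral $\int k_\epsilon\big(y-\sum_k x_k t_k\big)\,k_\sigma(\boldsymbol{w}-\boldsymbol{t})\,d\boldsymbol{t}$ is evaluated by peeling off one coordinate $t_k$ at a time using the auxiliary one-dimensional identity $[k_\delta(a\,\Box+b)\star k_\sigma](x)=k_{\sqrt{\delta^2+a^2\sigma^2}}(ax+b)$ (the Proposition proved just before the lemma), so that after $n$ applications the bandwidth has grown to $\sqrt{\epsilon^2+\sigma^2\sum_k x_k^2}$; sending $\epsilon\to 0$ then yields $k_{\sigma\|\boldsymbol{x}\|}(y-\boldsymbol{w}^T\boldsymbol{x})$. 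Your orthonormal-basis reduction is more geometric and considerably shorter: it sidesteps the Dirac-delta machinery, the $\epsilon\to 0$ limit, and the need for that preparatory proposition altogether, at the modest cost of invoking rotational invariance and handling the $\boldsymbol{x}=\boldsymbol{0}$ case separately (which you already flagged). The paper's coordinate-by-coordinate scheme, by contrast, makes explicit how each component $x_k$ contributes its share $\sigma^2 x_k^2$ to the final variance, and never leaves the standard basis.
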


\begin{proof}
Using the trivial identity $f\big(h(\boldsymbol{w})\big) = \int_\mathbb{R} f(y) \delta\big(y-h(\boldsymbol{w})\big) \,dy$ for any $h:\mathbb{R}^n \rightarrow \mathbb{R}$, we can proceed as below,

\begin{eqnarray}
& & [f\big(h(\, .\,)\big) \star k_\sigma] (\boldsymbol{w}) \\
&=& \int_{\mathcal{W}} f\big(h(\boldsymbol{t})\big) k_\sigma (\boldsymbol{w} - \boldsymbol{t}) \, d \boldsymbol{t} \\
&=& \int_{\mathcal{W}} \,\,\, \int_\mathbb{R} f(y) \delta\big(y-h(\boldsymbol{t})\big) \,dy \,\,\, k_\sigma (\boldsymbol{w} - \boldsymbol{t}) \, d \boldsymbol{t} \\
\label{eq:conv}
&=& \int_\mathbb{R} f(y) \,\,\, \int_{\mathcal{W}}   \delta\big(y-h(\boldsymbol{t})\big) k_\sigma (\boldsymbol{w} - \boldsymbol{t}) \, d \boldsymbol{t} \,\,\, dy \,.
\end{eqnarray}

We now focus on computing the inner integral $\int_{\mathcal{W}}   \delta\big(y-h(\boldsymbol{t})\big) k_\sigma (\boldsymbol{w} - \boldsymbol{t}) \, d \boldsymbol{t}$. Applying the definition of $h(\boldsymbol{w}) \triangleq \sum_{k=1}^n x_k w_k$ we proceed as the following,

\begin{eqnarray}
& &\int_{\mathcal{W}} \delta\big(y-\sum_{k=1}^n x_k t_k\big) k_\sigma (\boldsymbol{w} - \boldsymbol{t}) \, d \boldsymbol{t} \\
&=&\lim_{\epsilon \rightarrow 0} \int_{\mathcal{W}} k_\epsilon\big(y-\sum_{k=1}^n x_k t_k\big) k_\sigma (\boldsymbol{w} - \boldsymbol{t}) \, d \boldsymbol{t} \\
&=&\lim_{\epsilon \rightarrow 0} \int_{{\mathcal{W}}_n} \,\dots\, \Big( \int_{{\mathcal{W}}_1} k_\epsilon\big(y-\sum_{k=1}^n x_k t_k\big) k_\sigma (w_1 - t_1) \, d t_1 \Big) \,\dots\,  k_\sigma (w_n - t_n)\, d t_n \\
&=&\lim_{\epsilon \rightarrow 0} \int_{{\mathcal{W}}_n} \,\dots\, \Big( k_{\sqrt{\epsilon^2+\sigma^2 x_1^2}} \big(y-x_1 w_1 - \sum_{k=2}^n x_k t_k\big) \Big) \,\dots\,  k_\sigma (w_n - t_n)\, d t_n \\
&=& \lim_{\epsilon \rightarrow 0}  k_{\sqrt{\epsilon^2+\sigma^2 \sum_{k=1}^n x_k^2}} \big( y-\sum_{k=1}^n x_k w_k \big) \\
&=& k_{\sigma \| \boldsymbol{x}\|} \big( y- \boldsymbol{w}^T \boldsymbol{x} \big) \,.
\end{eqnarray}

Plugging this into (\ref{eq:conv}) leads to the identity $ [f\big(h(\, .\,)\big) \star k_\sigma] (\boldsymbol{w}) \\
= \int_\mathbb{R} f(y) k_{\sigma \| \boldsymbol{x}\|} \big( y- \boldsymbol{w}^T \boldsymbol{x} \big) dy$, which can be equivalently expressed as below,

\begin{eqnarray}
& & [f\big(h(\, .\,)\big) \star k_\sigma] (\boldsymbol{w}) \\
&=& \int_\mathbb{R} f(y) k_{\sigma \| \boldsymbol{x}\|} \big( y- \boldsymbol{w}^T \boldsymbol{x} \big) dy \\
&=& [ f \star k_{\sigma \| \boldsymbol{x}\|}] ( \boldsymbol{w}^T \boldsymbol{x} )\\
&=& \tilde{f} ( \boldsymbol{w}^T \boldsymbol{x} \,;\, \sigma \| \boldsymbol{x}\|) \,.
\end{eqnarray}

\qed

\end{proof}

\end{document}